\newtheorem{theorem}{Theorem}[section]
\newtheorem{lemma}[theorem]{Lemma}
\newtheorem{proposition}[theorem]{Proposition}
\newtheorem{conjecture}{Conjecture}
\theoremstyle{definition} \newtheorem{definition}[theorem]{Definition}
\theoremstyle{definition} 
\theoremstyle{definition} \newtheorem{remark}{Remark}
\title{A Theorem of Siemons and Wagner}
\author{Paul Bradley\\
\small Jacobs CH2M\\[-0.8ex]
\small Birmingham, U.K.\\
\small\tt paulmbradley82@gmail.com\\
}
\begin{document}
\maketitle
\vskip 0.4 true cm

\begin{center}{\textbf{Acknowledgements}}
\end{center}

\vskip 0.4 true cm

\begin{abstract}
In \cite{SandW} Siemons and Wagner described a relationship between the lengths of $G$-orbits on subsets of a $G$-set $\Omega$. They highlighted the situation where $\Delta \subset \Omega$ with $|\Delta|=k,$ and $|\Delta^G|>|\Sigma^G|$ for all $(k+1)$-subsets, $\Sigma$ of $\Omega$ where $\Delta\subset\Sigma$. They went on to classify all primitive groups with this property for $k=2$. Here we address some questions about primitive permutation groups satisfying this property when $k=3$ and list all $3$-homogeneous groups satisfying this condition. 

\end{abstract}

\section{Introduction}

Let $G$ be a permutation group acting on finite set $\Omega$ then $G$ has an induced action on $\Omega_k$ the set of subsets of $\Omega$ of size $k$. The number of orbits has been been the subject of numerous papers, not least of these is the paper of Livingstone and Wagner~\cite{Livingstone and Wagner}. However, similar results for $G$-orbit lengths are much less common. The main contributions coming from Siemons and Wagner \cite{SandW}, \cite{SW2} and Mnukhin \cite{Munchkin}. We note that we are interested in the finite case and acknowledge much work has been done on the infinite case.

In \cite{SandW} Siemons and Wagner describe a relationship between the lengths of $G$-orbits on subsets of a $G$-set $\Omega$.
They proved the following;
\begin{theorem}[Siemons and Wagner]\label{SW1}
Let $G$ be a transitive permutation group on a finite set $\Omega$ and let $\Delta$ be a subset of $\Omega$ of cardinality $k$ such that $|\Delta^G|>|\Sigma^G|$ for every subset $\Sigma$ containing $\Delta$ of cardinality $k+1$. Then
$$k+1 \ge |\Delta^{G_\Sigma}|>|\Sigma^{G_\Delta}|\ge 1.$$

Furthermore, if $k\ge 2$ then either
\begin{enumerate}
\item every $2$-element subset of $\Omega$ will appear in some $G$ image of $\Delta$ or
\item $G$ is imprimitive with blocks of imprimitivity $\mathfrak{B}_1,...,\mathfrak{B}_r$ ($1<|\mathfrak{B}_i |<|\Omega |$) each intersecting $\Delta$ in at most $1$ point such that every $2$-element subset of the form $\{\alpha_i, \alpha_j\}$ with $\alpha_i \in \mathfrak{B}_i \ne \mathfrak{B}_j \ni \alpha_j$ is contained in some $G$ image of $\Delta$.
\end{enumerate}

\end{theorem}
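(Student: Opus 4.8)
The plan is to treat the two assertions separately, the first by pure orbit counting and the second via a small incidence computation feeding a block-system argument. For the displayed chain, fix a $(k+1)$-subset $\Sigma\supseteq\Delta$ and set $H=G_\Delta\cap G_\Sigma$. Since $\Sigma\setminus\Delta$ is a single point, $H$ is simultaneously the stabiliser of $\Delta$ in $G_\Sigma$ and of $\Sigma$ in $G_\Delta$, so orbit--stabiliser gives $|\Delta^{G_\Sigma}|=[G_\Sigma:H]$ and $|\Sigma^{G_\Delta}|=[G_\Delta:H]$, whence
\[
\frac{|\Delta^{G_\Sigma}|}{|\Sigma^{G_\Delta}|}=\frac{|G_\Sigma|}{|G_\Delta|}=\frac{|\Delta^G|}{|\Sigma^G|}>1 .
\]
As these are ratios of positive integers, the strict middle inequality follows; the rightmost inequality is just non-emptiness of an orbit, and the leftmost holds because $G_\Sigma$ fixes $\Sigma$ setwise, so every member of $\Delta^{G_\Sigma}$ is one of the $k+1$ many $k$-subsets of $\Sigma$.

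For the ``furthermore'' part the key preliminary step is: for every $\omega\in\Omega\setminus\Delta$ the $(k+1)$-set $\Sigma_\omega:=\Delta\cup\{\omega\}$ contains at least two $G$-images of $\Delta$. I would get this by double counting the incident pairs $(\Delta',\Sigma')$ with $\Delta'\in\Delta^G$, $\Sigma'\in\Sigma_\omega^{\,G}$, $\Delta'\subseteq\Sigma'$: transitivity of $G$ on each orbit makes the two one-sided counts constant, giving $|\Delta^G|\cdot b=|\Sigma_\omega^{\,G}|\cdot a$, where $b$ is the number of $G$-images of $\Sigma_\omega$ containing $\Delta$ (so $b\ge 1$) and $a$ is the number of $G$-images of $\Delta$ inside $\Sigma_\omega$; now $|\Delta^G|>|\Sigma_\omega^{\,G}|$ forces $a>b\ge 1$, so $a\ge 2$. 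Since the only $k$-subsets of $\Sigma_\omega$ are $\Delta$ itself and the sets $(\Delta\setminus\{\delta\})\cup\{\omega\}$ for $\delta\in\Delta$, there is some $\delta\in\Delta$ with $(\Delta\setminus\{\delta\})\cup\{\omega\}\in\Delta^G$, and in particular $\{\omega,\delta'\}$ lies in a $G$-image of $\Delta$ for every $\delta'\in\Delta\setminus\{\delta\}$. The hypothesis is preserved under replacing $\Delta$ by any $\Delta'\in\Delta^G$, so this conclusion holds with $\Delta$ replaced by an arbitrary $G$-image of $\Delta$.

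Call a $2$-subset \emph{covered} if it lies in some $G$-image of $\Delta$, and let $R$, $\bar R$ be the sets of covered and uncovered $2$-subsets; both are $G$-invariant. If $R$ contains every $2$-subset we are in case (1), so suppose not. I then claim $\rho:=\bar R\cup\{(\alpha,\alpha):\alpha\in\Omega\}$ is a $G$-invariant equivalence relation. Reflexivity, symmetry and $G$-invariance are immediate, so it suffices to show that distinct $\alpha,\beta,\gamma$ with $\{\alpha,\beta\},\{\alpha,\gamma\}\in\bar R$ satisfy $\{\beta,\gamma\}\in\bar R$ (this, with reflexivity and symmetry, yields an equivalence relation). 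If not, $\{\beta,\gamma\}$ lies in some $G$-image $\Delta'$ of $\Delta$; then $\alpha\notin\Delta'$, since otherwise $\{\alpha,\beta\}$ would be covered. Applying the preliminary step to $\Delta'$ with $\omega=\alpha$ produces $\delta\in\Delta'$ with $\{\alpha,\epsilon\}$ covered for all $\epsilon\in\Delta'\setminus\{\delta\}$; as $\beta,\gamma\in\Delta'$ are distinct, one of them differs from $\delta$, making $\{\alpha,\beta\}$ or $\{\alpha,\gamma\}$ covered --- a contradiction. Hence $\rho$ is an equivalence relation, and its classes form a block system $\mathfrak{B}_1,\dots,\mathfrak{B}_r$ for the transitive group $G$. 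The blocks are not singletons (that would be case (1)), nor is any block all of $\Omega$ (since $k\ge 2$ makes $R$ nonempty, the pairs inside $\Delta$ being covered), so $1<|\mathfrak{B}_i|<|\Omega|$ and $G$ is imprimitive. Finally each $\mathfrak{B}_i$ meets $\Delta$ in at most one point (two points of $\Delta$ form a covered pair, so lie in distinct classes), and any pair with endpoints in distinct blocks is not in $\bar R$ by the definition of $\rho$, hence is covered --- which is precisely clause (2).

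I expect the only genuine obstacle to be the transitivity of $\rho$; the remaining content is orbit--stabiliser bookkeeping and one incidence count. The decisive point there is that the hypothesis is inherited by every $G$-image of $\Delta$, so the preliminary counting step can be re-applied to an image $\Delta'$ chosen to witness the covered pair $\{\beta,\gamma\}$, which is exactly what excludes two uncovered pairs sharing a vertex while the third pair is covered.
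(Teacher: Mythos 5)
This statement is quoted from Siemons and Wagner's paper \cite{SandW} and is given without proof in the present paper, so there is no internal proof to compare against; judged on its own, your argument is correct and complete, and it follows essentially the same strategy as the original source. The inequality chain via $H=G_\Delta\cap G_\Sigma$ and orbit--stabiliser is exactly right (and your double-counting step is precisely the paper's Lemma~\ref{ud lemma}); the dichotomy via the ``uncovered pairs'' relation $\rho$, shown to be a $G$-congruence by re-applying the counting step to an image $\Delta'$ witnessing a covered pair, is the standard route to clause (2), and all the boundary checks (blocks proper because $k\ge 2$ forces $R\neq\emptyset$, blocks non-singleton because $\bar R\neq\emptyset$, each block meeting $\Delta$ at most once) are in order.
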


They also proved that if $G$ is primitive and $k=2$ then $G\cong L_2(5)$ with $G$ acting on $6$ points. The aim of this paper is to answer some questions regarding primitive groups with the Siemons Wagner property and more specifically the case $k=3$. 

The Siemons Wagner property appears to be rare amongst primitive groups, indeed we can identify all cases where $\Omega$ has cardinality $n\leq 24$.

\begin{proposition}\label{PPPSW}
Let $G$ be a primitive permutation group acting on a set $\Omega$ of degree $n<25$. If there exists a $k$-subset $\Delta\subset\Omega$ such that $|\Delta^G|>|\Sigma^G|$ for any $k+1$-subset $\Sigma\subset\Omega$ where $\Delta \subset \Sigma$, then $G$ is in Table~\ref{PPSW}.
\begin{table}[htbp]
  \centering
  \begin{tabular}{|c|c|c|}\hline
    Group		&		Degree		&		k		\\ \hline
$	L_2(5)	$	&	$	6	$	&	$	2	$	\\
$	L_2(7)	$	&	$	8	$	&	$	3	$	\\
$	PGL(2,7)	$	&	$	8	$	&	$	3	$	\\
$	L_2(9)	$	&	$	10	$	&	$	4	$	\\
$	Sym(6)	$	&	$	10	$	&	$	4	$	\\
$	L_2(11)	$	&	$	12	$	&	$	5	$	\\
$	PGL(2,11)	$	&	$	12	$	&	$	5	$	\\
$	L_2(13)	$	&	$	14	$	&	$	6	$	\\
$	Alt(7)	$	&	$	15	$	&	$	6	$	\\
$	ASL(2,4)	$	&	$	16	$	&	$	6	$	\\
$	Alt(7)\ltimes (\mathbb{Z}_2)^4	$	&	$	16	$	&	$	7	$	\\
$	L_2(16)	$	&	$	17	$	&	$	5	$	\\
$	L_3(4)	$	&	$	21	$	&	$	6	$	\\
$	M(22)	$	&	$	22	$	&	$	10	$	\\
$	M(23)	$	&	$	23	$	&	$	10	$	\\
$	M(24)	$	&	$	24	$	&	$	11	$	\\ \hline
 \end{tabular}
\caption{Primitive permutation groups satisfying Siemons Wagner property}
\label{PPSW}
\end{table}
\end{proposition}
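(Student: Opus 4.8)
The plan is to treat Proposition \ref{PPPSW} as a finite verification. By the classification of the finite primitive permutation groups there are only finitely many primitive groups of each degree, and those of degree at most $24$ are completely and explicitly known; so it suffices to run through this list and, for each primitive $G$ on $\Omega$ with $|\Omega|=n<25$, decide for which $k$ a witnessing subset $\Delta$ exists. Two remarks restrict the range of $k$ that must be examined. First, $k\le 1$ never yields a witness when $G$ is primitive of degree greater than $2$: for $k=0$ one would need $1=|\Delta^G|>|\Sigma^G|=n$, while for $k=1$ one would need a $2$-subset orbit of length less than $n$, which forces a non-trivial suborbit of length $1$, contradicting maximality of a point stabiliser unless $G$ is regular of prime degree, and in that case the $2$-subset orbit has length exactly $n$, so no witness arises there either. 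Second, once $2k\ge n$ the property is ubiquitous rather than exceptional --- for example it holds with $k=n-1$ in every transitive group, and for every $k$ with $2k\ge n$ in the full symmetric group, by a plain size comparison of binomial coefficients --- so, following Siemons and Wagner \cite{SandW}, the interesting range is $2\le k<n/2$, and we restrict attention to it.

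Within this range we prune the candidate pairs $(G,k)$ before doing any orbit arithmetic. If a primitive $G$ has a witness $\Delta$, then by Theorem \ref{SW1} every $2$-element subset of $\Omega$ lies in a $G$-image of $\Delta$; since each $k$-set contains $\binom{k}{2}$ pairs this gives $|\Delta^G|\ge \binom{n}{2}/\binom{k}{2}=n(n-1)/(k(k-1))$, and together with $|\Delta^G|\mid|G|$ and $|\Delta^G|\le\binom{n}{k}$ this already eliminates most pairs. For each pair that survives we compute the $G$-orbits on $k$-subsets and on $(k+1)$-subsets, record all orbit lengths, and build the covering relation between the two families by taking one representative $\Sigma$ from each $(k+1)$-orbit and identifying the $G$-orbits of the $k+1$ subsets of $\Sigma$ of size $k$; an orbit $\mathcal{O}$ of $k$-subsets is then a witness precisely when $|\mathcal{O}|$ strictly exceeds the length of every $(k+1)$-orbit lying above it. Where $\binom{n}{k}$ is too large to handle all subsets comfortably --- above all for the Mathieu actions of degrees $22$, $23$, $24$ and for the primitive groups of degree $16$ --- one works instead with orbit representatives and stabiliser orders and uses the double-counting identity $|\Delta^G|\cdot\#\{\Sigma'\in\Sigma^G:\Delta\subseteq\Sigma'\}=|\Sigma^G|\cdot\#\{\Delta'\in\Delta^G:\Delta'\subseteq\Sigma\}$, valid for any incident pair $\Delta\subseteq\Sigma$: since the two counts on the right are bounded by $n-k$ and $k+1$, the inequality $|\Delta^G|>|\Sigma^G|$ reduces to comparing two small integers that can be read off locally from $\Sigma$ and from $\Delta$, exactly in the spirit of Theorem \ref{SW1}.

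The main obstacle is not any single argument but scale and the rigour of the bookkeeping: one must be certain that the search is exhaustive over all primitive groups of degree below $25$ and over all admissible $k$, and that the orbit-length comparisons are carried out correctly in the few heavy cases (the large Mathieu groups, the degree-$16$ affine groups), while in the smallest cases --- such as $L_2(5)$ at $k=2$, already settled in \cite{SandW} --- the verification can even be done by hand. The preliminary bounds of the previous paragraph are what make the computation tractable, collapsing the problem to a short list of candidate pairs $(G,k)$; carrying the check through for all of them leaves exactly the entries of Table \ref{PPSW}.
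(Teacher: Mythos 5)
Your proposal is correct and follows essentially the same route as the paper, which likewise proves Proposition~\ref{PPPSW} by an exhaustive machine check over the classified primitive groups of degree below $25$ (the \textsc{Magma} code in the final section). The only real difference is that you make explicit the restriction to the range $2\le k<n/2$ and the pruning via Theorem~\ref{SW1} and the double-counting identity (the paper's Lemma~\ref{ud lemma}), points the paper leaves implicit in its code; this is a worthwhile clarification, since for $2k\ge n$ the property holds trivially and the proposition as literally stated needs that restriction.
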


\begin{proof}
We can compile Table~\ref{PPSW} by direct calculation with the database of primitive groups in \textsc{Magma}. The code used to test for the Siemons Wagner property is given at the end of this paper.
\end{proof}
\begin{remark}
We note that the database of primitive groups used in the \textsc{Magma} calculations was produced by Roney-Dougal in \cite{Colva} and Sims~\cite{SimsPrim}.
\end{remark}

\begin{definition}
Let $G$ be a group acting on a set $\Omega$ with cardinality $n$. Denote by $\Omega_k$ the set of $k$-subsets of $\Omega$. We note that $|\Omega_k|=\binom{n}{k}$ and that $G$ has an induced action on $\Omega_k$. If $G$ is transitive on $\Omega_k$ we call $G$ $k$-homogeneous. 

If $G$ is a permutation group on a finite set $\Omega$ and $\Delta$ is a subset of $\Omega$ of cardinality $3$ such that $|\Delta^G|>|\Sigma^G|$ for every subset $\Sigma$ containing $\Delta$ of cardinality $4$. Then we say that $G$ satisfies condition $\star$.
\end{definition}

\section{General Results}

Here we present a few preliminary results looking at the case when $k=3$ and take some steps towards classifying primitive groups with this property for $k=3$.

In general we will make use of the following simple but effective lemma.

\begin{lemma}\label{ud lemma}
Let $G$ be a permutation group acting on $n$ points, let $\Sigma^G$ be an $G$-orbit of a $(k+1)$-subset, and $\Delta^G$ and $G$-orbit of a $k$-subset $\Delta$. If $\Sigma \supset \Delta$ then letting $d = |\{\alpha \in \Sigma \ | \ \Sigma\setminus\{\alpha\} \in \Delta^G\}|$ and $u=|\{\beta \in \Omega\ | \ \Delta \cup \{\beta\} \in \Sigma^G\}|$ then
$$d|\Sigma^G|=u|\Delta^G|.$$
\end{lemma}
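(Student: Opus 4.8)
The statement is a transitivity/double-counting identity, so the plan is to count ``flags'' $(\Delta',\Sigma')$ with $\Delta'\in\Delta^G$, $\Sigma'\in\Sigma^G$ and $\Delta'\subseteq\Sigma'$ in two ways. First I would record the key equivariance observation: for any $g\in G$ the map $\alpha\mapsto\alpha^g$ carries $\{\alpha\in\Sigma\mid\Sigma\setminus\{\alpha\}\in\Delta^G\}$ bijectively onto $\{\alpha'\in\Sigma^g\mid\Sigma^g\setminus\{\alpha'\}\in\Delta^G\}$, because $(\Sigma\setminus\{\alpha\})^g=\Sigma^g\setminus\{\alpha^g\}$ and $\Delta^G$ is $G$-invariant. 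Hence the integer $d$ is the same if computed from any member of the orbit $\Sigma^G$, not just from $\Sigma$ itself; symmetrically, $u=\lvert\{\beta\in\Omega\mid\Delta'\cup\{\beta\}\in\Sigma^G\}\rvert$ is independent of the choice of $\Delta'\in\Delta^G$.

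Next I would run the double count. Fix the set $\mathcal{F}=\{(\Delta',\Sigma') : \Delta'\in\Delta^G,\ \Sigma'\in\Sigma^G,\ \Delta'\subseteq\Sigma'\}$. Grouping by the second coordinate: for a fixed $\Sigma'\in\Sigma^G$, the $\Delta'\in\Delta^G$ with $\Delta'\subseteq\Sigma'$ are exactly the sets $\Sigma'\setminus\{\alpha\}$ for those $\alpha\in\Sigma'$ with $\Sigma'\setminus\{\alpha\}\in\Delta^G$ (here using $|\Sigma'|=|\Delta'|+1$ so that every $k$-subset of $\Sigma'$ has this form), and distinct $\alpha$ give distinct $\Delta'$; by the equivariance observation this number is $d$. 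Therefore $\lvert\mathcal{F}\rvert=d\,\lvert\Sigma^G\rvert$. Grouping instead by the first coordinate: for a fixed $\Delta'\in\Delta^G$, the $\Sigma'\in\Sigma^G$ with $\Delta'\subseteq\Sigma'$ are exactly the sets $\Delta'\cup\{\beta\}$ with $\beta\in\Omega$ and $\Delta'\cup\{\beta\}\in\Sigma^G$, and distinct such $\beta$ give distinct $\Sigma'$; this number is $u$. Therefore $\lvert\mathcal{F}\rvert=u\,\lvert\Delta^G\rvert$. Equating the two expressions for $\lvert\mathcal{F}\rvert$ gives $d\,\lvert\Sigma^G\rvert=u\,\lvert\Delta^G\rvert$.

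I do not expect a genuine obstacle here; the only point requiring a little care is the well-definedness step, i.e.\ checking that $d$ and $u$ really are orbit invariants so that the two group-counts of $\mathcal{F}$ come out as clean products. The hypothesis $\Delta\subset\Sigma$ is not needed for the identity itself (if $d=0$ then automatically $u=0$), but it guarantees $\mathcal{F}\neq\varnothing$, so that in the intended applications both sides are positive; I would remark on this. Finally one notes that $\lvert\mathcal F\rvert$ is finite since $\Omega$ is finite, so all quantities involved are finite nonnegative integers and the division implicit in later uses of the lemma is legitimate.
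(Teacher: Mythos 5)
Your double count of inclusion flags $(\Delta',\Sigma')$ is exactly the paper's argument, which phrases it as counting edges of the bipartite graph on $\Sigma^G\cup\Delta^G$ with an edge whenever one set contains the other; your write-up simply makes explicit the equivariance step showing $d$ and $u$ are orbit invariants, which the paper leaves implicit. The proposal is correct and takes essentially the same approach.
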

\begin{proof}
We form a graph with vertex set the elements of $\Sigma^G$ and $\Delta^G$, then we draw edge $(s,t)$ if and only if $t \subset s$. Then the number of edges is equal to $d|\Sigma^G|$ and $u|\Delta^G|$.
\end{proof}

We now are able to prove the following initial results regarding primitive permutation groups satisfying $\star$ without any further restrictions.

\begin{proposition}\label{maxsize}
Let $G$ be a primitive permutation group acting on a set $\Omega$ of cardinality $n\ge 8$.
Let $\Delta$ be a $3$-subset of $\Omega$ such that $|\Delta^{G}|>|\Sigma^{G}|$ for all $4$-subsets $\Sigma$ containing $\Delta$. Then $\Delta^G$ is of maximal length of any orbit on $3$-subsets.

\end{proposition}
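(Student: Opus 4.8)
The plan is to argue by contradiction: suppose $\Delta^G$ is not of maximal length among orbits on $3$-subsets, so there is a $3$-subset $\Gamma$ with $|\Gamma^G| > |\Delta^G|$. The idea is to use the fact that $G$ is primitive (hence, by Theorem~\ref{SW1} with $k=3\ge 2$, every $2$-subset of $\Omega$ lies in some $G$-image of $\Delta$ — alternative (2) of that theorem is excluded by primitivity) to build a ``bridge'' of $3$-subsets connecting $\Delta$ to $\Gamma$, along which orbit lengths can only decrease, yielding $|\Gamma^G| \le |\Delta^G|$, a contradiction.

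First I would set up the comparison between a $3$-subset $\Theta$ and a $4$-subset $\Sigma \supset \Theta$ using Lemma~\ref{ud lemma}: if $\Delta^G$ is the longest orbit among the $3$-subsets contained in $\Sigma$, then in the $d$--$u$ relation $d|\Sigma^G| = u|\Delta^G|$ we have $d\ge 1$ and, since $\Delta^G$ beats every $4$-orbit above it, the hypothesis forces control on how many $3$-orbits of length $\ge |\Delta^G|$ can sit below a given $4$-orbit. Concretely, for any $3$-subset $\Theta$ and any $\beta\notin\Theta$, applying Lemma~\ref{ud lemma} to the pair $(\Theta,\ \Theta\cup\{\beta\})$ lets me compare $|\Theta^G|$ with $|(\Theta\cup\{\beta\})^G|$; iterating this gives a chain of inequalities. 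The key point is that if $\Delta\subset\Sigma$ then $|\Sigma^G| < |\Delta^G|$, and combining with $d|\Sigma^G| = u|\Delta^G|$ (with $u\ge 1$ since $\Delta\subset\Sigma$) gives $d\ge 2$, i.e. at least two of the $3$-subsets inside $\Sigma$ lie in $\Delta^G$.

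The combinatorial heart of the argument is then: starting from $\Delta$, repeatedly enlarge by one point to a $4$-set $\Sigma$ and use $d\ge 2$ to find a \emph{different} $3$-subset of $\Sigma$ that is still in $\Delta^G$; by deleting the appropriate point one moves to a new $3$-set in $\Delta^G$ sharing two points with the old one. Since every $2$-subset of $\Omega$ occurs in some element of $\Delta^G$ (primitivity plus Theorem~\ref{SW1}), these moves let me reach, from $\Delta$, every $3$-subset that shares at least a $2$-subset with some member of $\Delta^G$ — and, by connectivity of the ``shares two points'' graph on $3$-subsets of an $n$-set with $n\ge 8$ — eventually every $3$-subset of $\Omega$. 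Hence $\Gamma \in \Delta^G$, forcing $|\Gamma^G| = |\Delta^G|$, contradicting $|\Gamma^G| > |\Delta^G|$.

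The main obstacle I anticipate is making the ``bridge'' step fully rigorous: when I pass from $\Delta'\in\Delta^G$ to a $4$-set $\Sigma\supset\Delta'$ and invoke $d\ge 2$, I must ensure the second $3$-subset of $\Sigma$ in $\Delta^G$ is genuinely new (not $\Delta'$ itself) and that iterating reaches an arbitrary target $2$-subset, not just those ``close'' to $\Delta$; this is where the hypothesis $n\ge 8$ and primitivity must be used carefully, and it is plausible the cleanest route instead picks, for a putative larger orbit $\Gamma^G$, a $4$-set $\Sigma\supset\Gamma$ and shows some $3$-subset of $\Sigma$ has an even larger orbit, contradicting maximality of the largest $3$-orbit and simultaneously pinning that maximal orbit down to be $\Delta^G$. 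Either way, the delicate point is the interplay between Lemma~\ref{ud lemma} giving $d\ge 2$ and the global reach of the resulting local moves.
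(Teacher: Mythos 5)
Your central ``bridge'' argument has a genuine gap, and it is not the route the paper takes. The local move you describe --- pass from $\Delta'\in\Delta^G$ to a $4$-set $\Sigma\supset\Delta'$, use $d\ge 2$ to find a second $3$-subset of $\Sigma$ lying in $\Delta^G$, and step to it --- by construction never leaves the orbit $\Delta^G$. So the walk can only visit $3$-subsets already in $\Delta^G$, and the claim that it ``eventually reaches every $3$-subset of $\Omega$'' is exactly the claim that $G$ is $3$-homogeneous. That is strictly stronger than the proposition (which asserts only that $|\Delta^G|$ is maximal among $3$-orbit lengths) and does not follow from the hypotheses; the paper obtains $3$-homogeneity only later, and only under the extra assumption that $G_\Delta$ is transitive on $\Delta$. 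The connectivity of the ``shares two points'' graph on all $3$-subsets is irrelevant, because your moves are confined to the subgraph induced on $\Delta^G$; concluding $\Gamma\in\Delta^G$ is therefore unjustified.

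The repair is the comparison you only gesture at in your final paragraph, and it is what the paper actually does. Given a $3$-subset $\Delta_2\notin\Delta^G$, use Theorem~\ref{SW1} (primitivity excludes alternative (2)) to arrange, after replacing $\Delta_2$ by a suitable representative, that $\Delta=\{1,2,3\}$ and $\Delta_2=\{1,2,4\}$; set $\Sigma=\{1,2,3,4\}$ and apply Lemma~\ref{ud lemma} \emph{twice}, once to each orbit: $u_1|\Delta^G|=d_1|\Sigma^G|$ and $u_2|\Delta_2^G|=d_2|\Sigma^G|$. Your observation that $d_1\ge 2$ (from $u_1<d_1$) is correct and is used, but the upper bound $d_1\le 3$ --- which holds because $\Delta_2\subset\Sigma$ is \emph{not} in $\Delta^G$ --- matters just as much. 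If $d_1=3$ then $d_2=1$ and $|\Delta_2^G|\le u_2|\Delta_2^G|=|\Sigma^G|<|\Delta^G|$. If $d_1=2$ then $u_1=1$, so $|\Sigma^G|=\tfrac{1}{2}|\Delta^G|$ and $|\Delta^G|=\tfrac{2u_2}{d_2}|\Delta_2^G|\ge|\Delta_2^G|$ since $d_2\le 2\le 2u_2$. No global connectivity argument is needed.
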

\begin{proof}[Proof of Proposition~\ref{maxsize}]
Let $\Delta=\Delta_1$. If $G$ is $3$-homogeneous we are done, so assume there exists a $3$-subset $\Delta_2$ which is not in $\Delta_1^G.$ We wish to show that $|\Delta_1^G|\ge |\Delta_2^G|$. Theorem~\ref{SW1} tells us that every $2$-subset of $\Omega$ appears in the $G$-image of $\Delta_1,$  Hence we may assume that $|\Delta_1 \cap \Delta_2|=2.$ We may now set $\Delta_1 = \{1,2,3\}$ and $\Delta_2 = \{1,2,4\}.$ We may now choose $\Sigma =\{1,2,3,4\}$ and note that $G_\Sigma$ is not transitive on $\Sigma$ and that $|\Sigma^G|<|\Delta_1^G|.$ 

We now consider the possible structure of $G_\Sigma$ as a subgroup of $Sym(4).$ Theorem~\ref{SW1} tells us that 
$$4\ge |\Delta_1^{G_\Sigma}|>|\Sigma^{G_{\Delta_1}}| \ge 1,$$
and moreover $|\Delta_1^{G_\Sigma}|= 2$ or $3$.

By applying Lemma~\ref{ud lemma} and noting that $|\Delta_1^G|>|\Sigma^G|$ we have the following equations.

\begin{align*} 
u_1|\Delta_1^G| &= d_1|\Sigma^G|, \\ 
u_2|\Delta_2^G| &= d_2|\Sigma^G| .\\
\end{align*}

We also note that $1\le u_1 < d_1 \le 3$ and $u_1, u_2, d_1, d_2 >0.$

If $|\Delta_1^{G_\Sigma}|=3$ then $d_1=3$ giving us $u_1|\Delta_1^G| = 3|\Sigma^G|.$ This implies that $d_2=1$. Combining these gives
$$|\Delta_2^G| \le u_2|\Delta_2^G| = |\Sigma^G| =\frac{u_1|\Delta_1^G|}{3} < |\Delta_1^G|.$$

Next we consider the case when $|\Delta_1^{G_\Sigma}|=2$. Here we have $d_1=2$ and so $2|\Sigma^G|=u_1|\Delta_1^G|$. It immediately follows that $u_1=1$. Moreover $d_2|\Sigma^G|=u_2|\Delta_2^G|$ where $1\le d_2\le 2.$ Now
\begin{align*}
u_2|\Delta_2^G| & = d_2|\Sigma^G|,\\
u_2|\Delta_2^G| & = \frac{d_2}{2}|\Delta_1^G|,\\
\frac{2u_2}{d_2} |\Delta_2^G| & = |\Delta_1^G|.
\end{align*}
where $2u_2 \geq d_2$ and so $|\Delta_1^G| \ge |\Delta_2^G|$ as required.

\end{proof}

\begin{proposition}
Let $G$ be a primitive permutation group satisfying $\star$. Let $\Delta$ be a representative of the large orbit on $\Omega_3$ and let $G_\Delta$, the set-wise stabilizer of $\Delta$ in $G,$ be transitive on $\Delta$. Then $G$ is $3$-homogeneous.
\end{proposition}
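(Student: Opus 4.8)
The plan is to argue by contradiction: assume $G$ is primitive, satisfies $\star$, $\Delta$ lies in the longest $G$-orbit on $\Omega_3$ and $G_\Delta$ is transitive on $\Delta$, but $G$ is \emph{not} $3$-homogeneous. By Proposition~\ref{maxsize} we have $|\Delta^G|>|\Sigma^G|$ for every $4$-superset $\Sigma$ of $\Delta$. The first step is to show that $G$ is $2$-homogeneous: by Theorem~\ref{SW1} (with $k=3\ge 2$) and primitivity only alternative~(1) can occur, so every $2$-subset of $\Omega$ lies in some $G$-image of $\Delta$; on the other hand $G_\Delta$ is transitive on $\Delta$, hence on the three $2$-subsets of $\Delta$, so these all lie in a single $G$-orbit on $\Omega_2$, and therefore \emph{every} $2$-subset of $\Omega$ lies in that one orbit. (A short separate check, using that $\Delta^G$ is the longest $3$-orbit and $G_\Delta$ is transitive on $\Delta$, excludes the affine $2$-homogeneous-but-not-$2$-transitive groups — there the longest $3$-orbit consists of ``generic'' triples, whose stabiliser is trivial or a single transposition fixing the middle point — so we may take $G$ to be $2$-transitive.)

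Next I would set up the configuration used in the proof of Proposition~\ref{maxsize}. Since $\Delta^G$ is a proper non-empty family of triples and the graph on $\Omega_3$ joining two triples that meet in two points is connected, after replacing $\Delta$ by a $G$-translate we may write $\Delta=\{1,2,3\}$, fix a triple $\{1,2,4\}\notin\Delta^G$, and put $\Sigma=\{1,2,3,4\}$. Applying Lemma~\ref{ud lemma} to $\Delta^G$ and an arbitrary $4$-superset $\Sigma'$ of $\Delta$, together with $|\Delta^G|>|\Sigma'^G|$, gives $u<d$ and hence $d\ge 2$: every $4$-superset of $\Delta$ contains a triple of $\Delta^G$ besides $\Delta$ itself, and (relabelling $1,2$ if necessary) $\{2,3,4\}\in\Delta^G$.

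The heart of the argument is to convert $\star$ into a bound on subdegrees. Using $2$-homogeneity one expresses $|\Delta^G|$ and $|\Sigma'^G|$ in terms of the action of the pair-stabiliser $G_{\{1,2\}}$ on $\Omega\setminus\{1,2\}$; letting $\Sigma'$ range over all $4$-supersets of $\Delta$, the inequalities $|\Delta^G|>|\Sigma'^G|$ force the stabiliser $G_{\{1,2\},3}$ of the point $3$ and the pair $\{1,2\}$ to have every orbit on $\Omega\setminus\Delta$ of length at most $3$. Because $G_\Delta$ is transitive on $\Delta$, this subgroup has index $3$ in $G_\Delta$, so $G_\Delta$ has all orbits on $\Omega\setminus\Delta$ of length at most $9$; and the pointwise stabiliser $G_{(\Delta)}=G_1\cap G_2\cap G_3$, which lies in $G_{\{1,2\},3}$ and acts faithfully on $\Omega\setminus\Delta$, has all such orbits of length at most $3$.

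To finish I would invoke the classification of $2$-transitive groups. For $G$ $2$-transitive and distinct from $Alt(n)$ and $Sym(n)$ (which are $3$-homogeneous), this subdegree restriction, \emph{together with} the hypotheses that $\Delta^G$ is the longest $3$-orbit and $G_\Delta$ is transitive on $\Delta$, leaves only degrees $n<25$; by Proposition~\ref{PPPSW} the only such groups satisfying $\star$ with $k=3$ are $L_2(7)$ and $PGL(2,7)$ in their degree-$8$ actions, both of which are $3$-homogeneous — a contradiction. Hence $G$ is $3$-homogeneous. I expect this last step to be the real obstacle: the subdegree bound on its own does not bound $n$ (the sharply $2$-transitive groups $AGL(1,q)$ satisfy it for all $q$), so one genuinely needs the extra leverage of ``$\Delta^G$ maximal and $G_\Delta$ transitive on $\Delta$'' — which is precisely what rules out $A\Gamma L(1,q)$ — and, realistically, the $2$-transitive classification (or a suitable bounded-subdegrees theorem) to force $n$ to be small.
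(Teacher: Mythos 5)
Your proposal goes down a genuinely different road from the paper, and as written it has a real gap at exactly the point you flag yourself. The opening moves are sound: Theorem~\ref{SW1} plus primitivity gives that every $2$-subset lies in a $G$-image of $\Delta$, and transitivity of $G_\Delta$ on $\Delta$ (a transitive subgroup of $Sym(3)$ is transitive on the three pairs in $\Delta$) then yields $2$-homogeneity; and Lemma~\ref{ud lemma} together with $|\Delta^G|>|\Sigma'^G|$ does force $u<d\le 4$, so in fact the $G_\Delta$-orbits on $\Omega\setminus\Delta$ (not merely the $G_{\{1,2\},3}$-orbits) all have length at most $3$. But the proof then rests on two unproven assertions: the ``short separate check'' eliminating the $2$-homogeneous-but-not-$2$-transitive groups, and, much more seriously, the claim that the subdegree restriction together with ``$\Delta^G$ longest and $G_\Delta$ transitive on $\Delta$'' forces $n<25$ once one runs through the classification of finite $2$-transitive groups. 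That last claim is the entire content of the statement in this approach; it would require a case-by-case analysis of the $2$-transitive families (affine groups, $PSL(d,q)$, unitary, Suzuki and Ree groups, $Sp_{2d}(2)$, the sporadic cases), none of which is carried out, and you concede it is ``the real obstacle''. So the argument is not complete, and it also imports classification-strength machinery into a statement the paper proves in half a page.

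For comparison, the paper's proof stays entirely inside the $u$--$d$ bookkeeping and needs no classification. With $\Delta=\{1,2,3\}$, $\Delta_2=\{1,2,4\}\notin\Delta^G$ and $\Sigma=\{1,2,3,4\}$, transitivity of $G_\Delta$ on $\Delta$ supplies an element $g\in G_\Delta$ inducing the $3$-cycle $(1,2,3)$; one then examines the cycle of $g$ containing the point $4$. If $g$ fixes $4$, then $g$ permutes the three triples $\{1,2,4\},\{1,3,4\},\{2,3,4\}$ cyclically, and since Theorem~\ref{SW1} guarantees at least one of them lies in $\Delta^G$ (because $|\Delta^{G_\Sigma}|>1$), all of them do --- in particular $\Delta_2$, a contradiction. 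Otherwise, after passing to a suitable power of $g$ one may take the cycle through $4$ to have $3$-power length, say beginning $(4,5,6,\dots)$, and then $\Delta\cup\{4\}$, $\Delta\cup\{5\}$, $\Delta\cup\{6\}$ all lie in $\Sigma^{\langle g\rangle}$, so $u\ge 3$ in Lemma~\ref{ud lemma}; combined with $u<d$ this forces $d=4$, i.e.\ all four triples in $\Sigma$, including $\Delta_2$, lie in $\Delta^G$ --- again a contradiction. That single local computation is the idea your proposal is missing; to salvage your global route you would have to actually execute the $2$-transitive case analysis.
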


\begin{proof}
We assume $G$ is not $3$-homogeneous, then we let $\Delta=\Delta_1$ and $\Delta_2$ be a representative from a second $G$-orbit on $\Omega_3$. By Theorem~\ref{SW1} we can assume that $\Delta_1=\{1,2,3\}$ and $\Delta_2=\{1,2,4\}$. We also let $\Sigma=\{1,2,3,4\}$. As $G_{\Delta_1}$ is transitive on $\Delta_1$ there exists $g \in G_{\Delta_1}$ such that $g=\pi_1 \pi_2 \pi_3$ where $\pi_1=(1,2,3),$ $\pi_2=(4,...,)$ (the cycle containing $4$) and $\pi_3$ is a permutation on the remaining points of $\Omega$. Now as $\Delta_1^{G_{\Sigma}}>1$ there exists a subset of $\Sigma$ of the form $\{\alpha, \beta, 4\}$ which is in $\Delta_1^G$. If $\pi_2=(4)$ then $\Delta_2 \in \Delta_1^{\left<g\right>}$ which is a contradiction. Without loss then we can assume $g$ is such that $\pi_2$ is a cycle with length some power of $3$. We then only need consider two cases. 
\begin{enumerate}
\item $g=(1,2,3)(4,5,6)...$ or

\item $g=(1)(2)(3)(4,5,6)...$
\end{enumerate}

In both cases we have that $\{1,2,3,4\}, \{1,2,3,5\}, \{1,2,3,6\}$ are all in $\Sigma^{\left<g\right>}$. Using Lemma~\ref{ud lemma} we have $u|\Delta_1^G| = d|\Sigma^G|$ with $u\ge 3$ giving us that $d=4$ which is only possible if $\Delta_2 \in \Delta_1^G$ which is a contradiction. Hence $G$ is $3$-homogeneous as required.

\end{proof}

\section{The case that $G$ is $3$-homogeneous}

The list of Primitive groups stored in \textsc{Magma} and the procedure used in Proposition~\ref{PPPSW} were used to check for any instances of the Siemons Wagner property for $k=3$ in all primitive groups of degree $n\leq 200$. The only two groups found are given in Proposition~\ref{PPPSW}. As both of these groups are $3$-homogeneous the obvious question is are there any more.

\begin{theorem}\label{3hom}
Let $G$ be a $3$-homogeneous permutation group acting on a $G$-set, $\Omega$, of cardinality $n\ge8$. If the $G$-orbit $\Omega_3$ has length strictly greater than the $G$-orbit of any $4$-subset of $\Omega$ then $G\cong L_2(7)$ or $G\cong PGL(2,7)$.
\end{theorem}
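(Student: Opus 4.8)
The plan is to use the classification of $3$-homogeneous groups (Livingstone--Wagner together with the classification of finite simple groups) to reduce the problem to a short list of families, and then to rule out all but $L_2(7)$ and $PGL(2,7)$ by the orbit-counting arguments already developed in this section. Recall that a finite $3$-homogeneous group of degree $n \ge 8$ is either $3$-transitive, or one of a handful of exceptional $3$-homogeneous but not $3$-transitive groups, namely $AGL(1,8)$, $A\Gamma L(1,8)$, $A\Gamma L(1,32)$, $PSL(2,q)$ and $P\Gamma L(2,q)$ with $q \equiv 3 \pmod 4$; and the $3$-transitive groups of degree $n$ are the symmetric group $S_n$, the alternating group $A_n$ ($n \ge 5$), the affine groups $AGL(d,2)$ and $Z_2^4 \rtimes A_7$ of degree $2^d$, the groups $PGL(2,q)$, $PSL(2,q)$ (for $q$ prime power, on $q+1$ points), $M_{11}, M_{12}, M_{22}, M_{23}, M_{24}$, and the Higman--Sims-type groups $M_{11}$ on $12$ points and $\mathrm{Aut}(M_{22})$ etc. I would state this as the starting point, citing the standard references.

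\medskip

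First I would dispose of the ``large'' families. For $G = S_n$ or $A_n$ the group is $4$-homogeneous (indeed $4$-transitive for $A_n$ with $n \ge 6$), so $|\Omega_4|$ is a single orbit of length $\binom{n}{4}$, which exceeds $\binom{n}{3} = |\Omega_3|$ for $n \ge 8$; hence condition $\star$-type fails. The same observation kills any $4$-homogeneous group: $\binom{n}{4} > \binom{n}{3}$ precisely when $n > 7$, so no $4$-homogeneous group of degree $n \ge 8$ can have the $3$-set orbit strictly longest. This immediately removes $A_n$, $S_n$, $M_{11}$ (degree $11$ and $12$), $M_{12}$, $M_{23}$, $M_{24}$, and $M_{22}$ together with its extensions, since all of these are at least $4$-homogeneous on the relevant degree. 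It also removes $PGL(2,q)$ whenever $q \ge 7$, because $PGL(2,q)$ is sharply $3$-transitive of degree $q+1 \ge 8$ and one checks directly (or via Lemma~\ref{ud lemma}) that it is not the case that the $3$-set orbit dominates — here I would actually need the finer argument below, since $PGL(2,q)$ is exactly $3$-transitive and not $4$-homogeneous.

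\medskip

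The remaining cases are the ones that are $3$-homogeneous but not $4$-homogeneous: $PGL(2,q)$ and $PSL(2,q)$ on $q+1$ points, $P\Gamma L(2,q)$ and $PSL(2,q)$ with the extra homogeneity when $q \equiv 3 \pmod 4$, and the three affine exceptions of degree $8$ and $32$. For these I would compute the orbits on $4$-subsets and compare lengths using Lemma~\ref{ud lemma}. Since $G$ is $3$-homogeneous, $|\Delta^G| = \binom{n}{3}$, and for any orbit $\Sigma^G$ on $4$-sets we get $4\,|\Sigma^G| = u\,\binom{n}{3}$ with $u = |\{\beta : \Delta \cup \{\beta\} \in \Sigma^G\}| \ge 1$, so $|\Sigma^G| \ge \tfrac14\binom{n}{3}$; condition $\star$ forces $|\Sigma^G| < \binom{n}{3}$, i.e. $u \le 3$ for every $4$-orbit. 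Equivalently: fixing a $3$-set $\Delta$, the $n-3$ points $\beta \notin \Delta$ must fall into $G_\Delta$-orbits each of size $\le 3$ under the action that records which $4$-orbit $\Delta \cup \{\beta\}$ lies in — but more precisely one needs $u \le 3$ for the orbit, which translates into a bound on how a single $4$-set orbit meets the ``extensions'' of $\Delta$. For $PSL(2,7)$ and $PGL(2,7)$, $n = 8$ and $\binom{8}{3} = 56$; one enumerates the (few) orbits on $4$-subsets of the projective line, checks they all have length $< 56$, and confirms $\star$ holds, recovering the two groups in the statement. For $PSL(2,q)$ or $PGL(2,q)$ with $q + 1 = n \ge 10$ I expect the cross-ratio invariant to produce a $4$-set orbit with $u = 4$ (every extension of a generic $3$-set lands in the generic $4$-set orbit, since three points of $PG(1,q)$ can be moved to $0,1,\infty$ and the fourth ranges over many values giving the same orbit-type), forcing $|\Sigma^G| = \binom{n}{3}$ and violating strict inequality; the affine cases of degree $8$ and $32$ are finite checks. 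The main obstacle will be handling the $PSL(2,q)$ and $P\Gamma L(2,q)$ families uniformly in $q$: one must show that for all $q \ge 8$ with $q \equiv 3 \pmod 4$ (and separately for $PGL(2,q)$, $q \ge 7$) there is always a $4$-subset orbit of length $\ge \binom{q+1}{3}$, which amounts to a careful count of cross-ratio orbits on $4$-subsets of $PG(1,q)$ under the relevant subgroup of $P\Gamma L(2,q)$ and showing at least one has the maximal possible $u$; I would handle this by the standard normalization of three points to $\{0,1,\infty\}$ and an explicit description of the $j$-invariant action, treating the small sporadic-looking values of $q$ (e.g. $q = 9, 11, 19, 23, 27, 32$) by direct computation as in Proposition~\ref{PPPSW} and the generic range by the cross-ratio argument.
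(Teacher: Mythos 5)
Your overall strategy is the same as the paper's: reduce to the known list of $3$-homogeneous groups, discard the $4$-homogeneous ones via $\binom{n}{4}>\binom{n}{3}$ for $n\ge 8$, and attack the remaining $L_2(q)\le G\le P\Gamma L(2,q)$ families by producing a long orbit on $4$-subsets. Your reformulation through Lemma~\ref{ud lemma} (with $d=4$ by $3$-homogeneity, so $4|\Sigma^G|=u\binom{n}{3}$ and the hypothesis forces $u\le 3$ for every $4$-orbit) is sound and is essentially the paper's stabilizer-order comparison in different clothing. However, your elimination of the $3$-transitive candidates is partly wrong: $M_{11}$ in its degree-$12$ action and $M_{22}$ are \emph{not} $4$-homogeneous (for $M_{22}$, the prime $19$ divides $\binom{22}{4}$ but not $|M_{22}|$), so the inequality $\binom{n}{4}>\binom{n}{3}$ does not remove them; the paper eliminates these, together with $AGL(1,8)$, $A\Gamma L(1,8)$ and $A\Gamma L(1,32)$, by direct computation. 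You also place $AGL(d,2)$ and $2^4\rtimes A_7$ on your list of $3$-transitive groups and then never return to them.

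The more serious gap is that the crux of the theorem --- that for every $q>7$ some $4$-subset of $PG(q)$ has $L_2(q)$-orbit of length at least $\binom{q+1}{3}$ --- is left as ``I expect the cross-ratio invariant to produce a $4$-set orbit with $u=4$.'' That is precisely what has to be proved, and your heuristic is also slightly off: a $PGL(2,q)$-generic set $\{0,1,\infty,\lambda\}$ has $u=6$ (the six values $\lambda,1-\lambda,\lambda^{-1},\dots$), not $u=4$; what matters is that $u\ge 4$ fails for all orbits only when $\mathbb{F}_q\setminus\{0,1\}\subseteq\{-1,2,2^{-1}\}\cup\{\text{roots of }x^2-x+1\}$, which forces $q\le 7$. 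The paper makes this concrete by taking $\Sigma=\{0,1,\infty,\omega^a\}$ with $\omega^a\notin\{-1,2,2^{-1}\}$ and $\omega^{2a}-\omega^a+1\ne 0$, quoting \cite{bjl} to get $|PGL(2,q)_\Sigma|=4$, exhibiting an explicit element of this stabilizer outside $L_2(q)$ so that $|L_2(q)_\Sigma|\le 2$ and hence $|\Sigma^{L_2(q)}|\ge |L_2(q)|/2>|L_2(q)|/3=\binom{q+1}{3}$, and then counting to show such an $\omega^a$ exists exactly when $q>7$. Note that the $PSL$ versus $PGL$ distinction needs genuine care (an order-$4$ stabilizer in $PGL(2,q)$ could a priori lie entirely inside $PSL(2,q)$), which is why the paper produces that explicit non-$L_2(q)$ element; your sketch does not yet address this, so the infinite families are not actually disposed of.
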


Before we prove this we mention that the following notation for $PG(q)$ is used.
\begin{definition}
The projective line over $\mathbb{F}_q$ contains $q+1$ points and we denote it by $PG(q)$. The composition of these points and their representatives for $\mathbb{F}_q$ is as given in Table \ref{Tab2.1}.

\begin{table}[ht]
\centering
\begin{tabular}[h]{|l|l|}
  \hline
 Code & Span of\\ \hline
  $0$ & $ (1,0)$ \\
  $1$ & $  (1,1)$ \\
    $\vdots$ & $  \vdots$ \\
  $q-1$ & $ (1,q-1) $\\
  $\infty$ & $  (0,1) $\\

  \hline

\end{tabular}
\caption{Elements of Projective Line over $\mathbb{F}_q$}
\label{Tab2.1}
\end{table}
\end{definition}

\begin{proof}[Proof of Theorem~\ref{3hom}]
As $G$ is 3-homogeneous we note that any group hoping to have the property we are searching for cannot have any regular orbits on $\Omega_4$ and furthermore cannot be $4$-homogeneous.

We begin by compiling a list of possible candidates using results from Kantor~\cite{Kantor} and Cameron~\cite{CamFPG}. This gives the following possibilities $M_{11}$, $M_{22}$, $AGL(1,8)$, $A\Gamma L(1,8)$, $A\Gamma L(1,32),$ $L_2(q)$ and a family of groups $L_2(q)\le G \le P\Gamma L(2,q)$, where $q \equiv\ 3$ mod $4$.

Initially we can compute the groups $M_{11}$, $M_{22}$, $AGL(1,8)$, $A\Gamma L(1,8)$ and $A\Gamma L(1,32)$ and see that these do not satisfy the condition of having such a $3$-subset of their respective $G$-sets.

We now eliminate the possibilities for $q$ in the remaining families of groups. We begin by noting that all the groups which remain have $L_2(q)$ as their respective socles. As the $G$-sets of these groups are the same as for their socles, we need only show that $L_2(q)$ does not satisfy the condition of having all $4$-subsets with stabilizers of order greater than $2$. This follows as the size of the orbit on $3$-subsets being as large as possible but less than $\frac{|G|}{2}$ and so any over group cannot increase the length of this $3$-orbit but could fuse two or more orbits on $\Omega_4$.

Next we consider a specific subset of the projective line on which these remaining groups act.

We let $\omega$ be a generator for the multiplicative field of $q$ elements, that is $\omega^{q-1}=1$. We choose $\Sigma=\{0,1,\infty,\omega^{a}\}$ where $\omega^a \not\in \{-1,2,2^{-1}\}$ and, $\omega^{2a}-\omega^{a}+1\ne 0$.

We can now make use of a result in~\cite{bjl} which states that for such a set the stabilizer in $PGL(2, q)$ has order $4$. We now show that this set must have a stabilizer in $L_2(q)$ with order less than 4 by giving an element of $PGL(2,q)_\Sigma$ which is not in $L_2(q)$.

$$A=\begin{bmatrix} 1 & \omega^{a} \\ -1 & -1 \ \end{bmatrix}.$$

It is clear that $A$ will act on $\Sigma$ with the cycles $(0,\omega^{a})(1,\infty)$ and so $A\in PGL(2,q)_\Sigma.$
The determinant of $A$ is equal to $\omega^{a}-1$, moreover $\omega^{a} \neq 2$ by choice and so $A\not\in L_2(q)$ as required. Hence the stabilizer of $\Sigma$ in $L_2(q)$ must have order $1$ or $2$ and so the $L_2(q)$ orbit of $\Sigma$ is greater than the total number of $3$-subsets of $PG(q)$ hence such groups cannot have a large enough $3$-subset orbit to satisfy our condition.

This has now reduced our problem to finding fields for which no such element $\omega^a$ exists. We also note that we are interested in $q\ge 7$. In fact $q=7$ is the only such field in our range without such an element as a simple counting argument shows that any field with more than 8 elements must satisfy this requirement.

Finally we note that $L_2(7)<PGL(2,7) = P\Gamma L(2,7)$ and that Proposition~\ref{PPPSW} shows both of these groups satisfy the condition.

\end{proof}

We leave this section with the following Conjecture.

\begin{conjecture}
Let $G$ be a primitive permutation group acting on a $G$-set, $\Omega$, of cardinality $n\ge8$. If there exists a $3$-subset $\Delta\subset\Omega$ such that $|\Delta^G|>|\Sigma^G|$ for any $4$-subset $\Sigma$ containing $\Delta$, then $G\cong PSL(2,7)$ or $G\cong PGL(2,7)$.
\end{conjecture}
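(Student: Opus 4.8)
The statement to prove is the final \textbf{Conjecture}: that any primitive permutation group $G$ on $\Omega$ with $n \geq 8$ admitting a $3$-subset $\Delta$ with $|\Delta^G|$ strictly larger than $|\Sigma^G|$ for every $4$-superset $\Sigma$ must be $PSL(2,7)$ or $PGL(2,7)$ acting on $8$ points. Since this is labelled a conjecture, a complete proof is presumably not known, but I can still propose a strategy.

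The plan is to reduce the general primitive case to the $3$-homogeneous case, where Theorem~\ref{3hom} already gives the answer. First I would invoke Theorem~\ref{SW1}: since $G$ satisfies $\star$ and $k = 3 \geq 2$, either (1) every $2$-subset of $\Omega$ lies in some $G$-image of $\Delta$, or (2) $G$ is imprimitive; as $G$ is primitive, alternative (1) holds, so $G$ is transitive on $\Omega_2$, i.e. $2$-homogeneous. By the classification of $2$-transitive (hence $2$-homogeneous) groups, $G$ is either affine or almost simple, and in either case the socle and point stabilizer structure is tightly constrained. The next step is to push from $2$-homogeneity to $3$-homogeneity: I would show that the only $2$-homogeneous but not $3$-homogeneous primitive groups that can satisfy $\star$ are excluded by an orbit-counting argument analogous to the proof of Proposition~\ref{maxsize}. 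Concretely, if $G$ is $2$-homogeneous but has at least two orbits $\Delta_1^G, \Delta_2^G$ on $\Omega_3$, take $\Delta_1 = \{1,2,3\}$, $\Delta_2 = \{1,2,4\}$, $\Sigma = \{1,2,3,4\}$; Lemma~\ref{ud lemma} plus Proposition~\ref{maxsize} (which says $\Delta_1^G$ is the maximal $3$-orbit) gives a system of divisibility constraints on the orbit lengths and on $|\Delta_1^{G_\Sigma}|$ that one hopes is only consistent for the two target groups.

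After the reduction, I would split into the affine and almost simple cases. For the affine case, $G \leq AGL(d,p)$ with $N = (\mathbb{Z}_p)^d$ regular and normal; here I would use the fact that the translations act on $3$- and $4$-subsets in a controlled way: a $3$-subset and its translates generate affine configurations, and one can compute $|\Delta^G|$ and $|\Sigma^G|$ in terms of the point-stabilizer $G_0 \leq GL(d,p)$ and its action on ordered triples/quadruples of vectors. The key obstruction to $\star$ in the affine case is that $4$-subsets containing a generic affine-independent triple tend to have small stabilizers too, so $\star$ fails unless $d$ and $p$ are very small; the groups of degree $\leq 24$ in Table~\ref{PPSW} (e.g. $ASL(2,4)$, $Alt(7) \ltimes (\mathbb{Z}_2)^4$) satisfy the weaker condition for some $k$ but not for $k=3$, and one would want to rule out $k=3$ for all affine degrees. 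For the almost simple case, the socle is one of the $2$-transitive simple groups — $L_d(q)$, $Sp_{2m}(2)$, the unitary, Suzuki and Ree families, $A_n$, and the sporadic examples — and for each family I would bound $|\Delta^G| \leq |G|$ against $\binom{n}{3}$ and use that $\star$ forces $|\Delta^G|$ to essentially saturate $\binom{n}{3} \cdot (\text{small factor})$, which via order estimates $|G| \ll n^c$ for small $c$ eliminates all but finitely many $(G,n)$, which are then checked directly (as in Proposition~\ref{PPPSW}).

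The hard part will be the last step: going from ``$G$ is $2$-homogeneous and satisfies $\star$'' to ``$G$ is $3$-homogeneous'', i.e. showing that no $2$-homogeneous-but-not-$3$-homogeneous primitive group can have a $3$-orbit dominating all its $4$-superorbits. The divisibility argument via Lemma~\ref{ud lemma} and Proposition~\ref{maxsize} constrains things but does not obviously close the gap in general; one likely needs either a uniform bound showing that a non-$3$-homogeneous $2$-transitive group always has ``too many'' long $4$-orbits, or a family-by-family analysis of the subdegrees of $G$ on $\Omega_3$ and $\Omega_4$. Once that reduction is in place, Theorem~\ref{3hom} finishes the argument immediately, since it pins the $3$-homogeneous survivors down to $L_2(7)$ and $PGL(2,7)$. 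If the $2$-homogeneous-to-$3$-homogeneous reduction turns out to be false in general, the fallback is to handle the finitely many sporadic $2$-transitive actions and the small-degree members of each infinite family by computation, and to establish the asymptotic elimination directly from $|G| \leq |{\rm Aut}(T)|$ bounds for the socle $T$ — which is why the statement is currently only a conjecture rather than a theorem.
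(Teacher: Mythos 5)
This statement is stated in the paper as a \emph{conjecture}: the paper offers no proof of it, and only resolves the $3$-homogeneous case (Theorem~\ref{3hom}) together with computational verification for primitive groups of degree up to $200$ (Proposition~\ref{PPPSW}). So there is no proof in the paper to compare against, and your proposal is, as you acknowledge, a programme rather than a proof. The essential open step you identify --- passing from ``primitive and satisfying $\star$'' to ``$3$-homogeneous'' --- is exactly the gap that the paper also leaves open (its partial results in this direction are Proposition~\ref{maxsize} and the proposition showing that transitivity of $G_\Delta$ on $\Delta$ forces $3$-homogeneity), so your overall architecture matches the author's evident intent.

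There is, however, a concrete error in your first reduction. You claim that alternative (1) of Theorem~\ref{SW1} --- every $2$-subset of $\Omega$ appears in some $G$-image of $\Delta$ --- implies that $G$ is transitive on $\Omega_2$, i.e.\ $2$-homogeneous. It does not. The condition says only that every pair is \emph{contained in} some member of the orbit $\Delta^G$. Since $\Delta$ is a $3$-set it contains three pairs, which may lie in up to three distinct $G$-orbits on $\Omega_2$; the condition is then that the union of those (at most three) orbits covers $\Omega_2$, which is strictly weaker than $2$-homogeneity. Consequently you cannot immediately invoke the classification of $2$-transitive or $2$-homogeneous groups, and the subsequent affine/almost-simple dichotomy is not yet available. (A smaller slip: $2$-homogeneous does not imply $2$-transitive; the implication in your parenthetical runs the wrong way, though Kantor's classification of $2$-homogeneous groups would still apply if you could establish $2$-homogeneity.) Any honest attack on the conjecture must either repair this step --- e.g.\ by using primitivity more delicately to control the orbits on $\Omega_2$ --- or work directly with the weaker covering condition, which is presumably why the statement remains a conjecture.
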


\section{Non-primitive group examples}

For the reader's consideration we now turn briefly to non-primitive but transitive examples of groups with the Siemons Wagner property when $k=3.$ These are far more common than imprimitive examples. We present here three such groups and give their generators. In all three of the following examples we keep $\Delta=\{1,2,3\}$ as a representative of the large orbit on $\Omega_3$. For a given permutation group $G$ acting on a set $\Omega$, we denote the number of $G$-orbits on $\Omega_k$ by $\sigma_k$.

Example 1 is a subgroup of $Sym(8)$ where

\begin{eqnarray*}
G_1 & \cong & \langle(4,6),(1,2,5,3)(4,8)(6,7),(1,8)(4,6),(3,4,6),(1,7,8),(2,3)(4,6), \\
  & &(2,4)(3,6),(1,5)(7,8),(1,7)(5,8)\rangle.\\
\end{eqnarray*}

Here $|G_1|=1152$ and $|\Delta^{G_1}|=48$. This group satisfies the condition that every $2$-subset appears in some $G_1$-image of $\Delta$. The system of imprimitivity for $G_1$ is the set $$\left\{\{1,5,7,8\},\{2,3,4,6\}\right\}.$$ The $\sigma_k$ values are $\sigma_1=1,\ \sigma_2=2,\ \sigma_3=2$ and $\sigma_4=3$.

It is also clear that there exists a $4$-subset for which no $G_1$-image contains $\Delta$ as a subset (the system of imprimitivity is a single orbit).

Example 2 is a subgroup of $Sym(9)$ where
\begin{eqnarray*}
G_2 & \cong & \langle(4,7)(5,9)(6,1),(8,9,5)(2,7,4)(3,1,6), \\ 
 & & (4,5,6)(7,1,9),(8,3,2)(7,1,9)\rangle.
\end{eqnarray*}

Here $|G_2|=54$ and $|\Delta^{G_2}|=54$. This group satisfies the condition that every $2$-subset appears in some $G_2$-image of $\Delta$. The system of imprimitivity for $G_2$ is the set $$\{\{1,7,9\},\{2,3,8\},\{4,5,6\}\}.$$
The $\sigma_k$ values are $\sigma_1=1,\ \sigma_2=2,\ \sigma_3=5$ and $\sigma_4=5$. In this case $\Delta$ appears as a subset of an element of every $G_2$-orbit on subsets of size $4.$

Example 3 is a subgroup of $Sym(16)$ where
\begin{eqnarray*}
G_3 & \cong & \langle  (1, 12)(7, 3)(11, 8)(4, 2)(5, 10)(6, 9)(13, 15)(14, 16),\\
  & &(1, 8, 6, 14)(7, 2, 5, 13)(11, 9, 16, 12)(4, 10, 15, 3),\\
    & &(1, 14)(7, 13)(11, 12)(4, 3)(5, 2)(6, 8)(9, 16)(10, 15),\\
  & & (1, 6)(7, 5)(11, 15)(4, 16)(2, 14)(8, 13)(9, 12)(10, 3),\\
 & & (1, 16)(7, 15)(11, 6)(4, 5)(2, 3)(8, 12)(9, 14)(10, 13),\\
 & &  (7, 8)(9, 10)(3, 12)(13, 14),\\
 & & (11, 4)(9, 10)(3, 12)(15, 16),\\
 & & (1, 7)(11, 4)(5, 6)(2, 8)(9, 10)(3, 12)(13, 14)(15, 16)\rangle.\\
\end{eqnarray*}

Here $|G_3|=256$ and $|\Delta^{G_3}|=256$. This group does not satisfy the condition of every $2$-subset being contained in some $G_3$-image of $\Delta$. The $\sigma_k$ values for $G_3$ are $\sigma_1=1,$ $\sigma_2=6,$ $\sigma_3=11,$ $\sigma_4=35,$ $\sigma_5=48,$ $\sigma_6=91,$ $\sigma_7=100$ and $\sigma_8=132.$ Here we have three systems of imprimitivity
 $$ \{  \{ 1, 5 \},
    \{ 2, 14 \},
    \{ 3, 9 \},
    \{ 4, 16 \},
    \{ 6, 7 \},
    \{ 8, 13 \},
    \{ 10, 12 \},
    \{ 11, 15 \}\},$$

$$\{   \{ 3, 9, 10, 12 \},
    \{ 1, 5, 6, 7 \},
    \{ 4, 11, 15, 16 \},
    \{ 2, 8, 13, 14 \} \}$$

    and
$$   \{ \{ 1, 3, 5, 6, 7, 9, 10, 12 \},
    \{ 2, 4, 8, 11, 13, 14, 15, 16 \}\}.$$

It is also clear that there exists a $4$-subset for which no $G_3$-image contains $\Delta$ as a subset (the system of imprimitivity is a single orbit). 

\section{\textsc{Magma} Code for Proposition~\ref{PPPSW}}

We finish by giving the \textsc{Magma} implementation for the search for Primitive groups with the Siemons Wagner property used to compile Table~\ref{PPSW}

 \begin{verbatim}
Z:=Integers();

SizeofOrbsPRIMk:=procedure(G,k,~a);
S:={}; K:={}; D:={1..Degree(G)}; 
	kD:=Subsets(D,k);a:={};
Omega:=GSet(G,kD);
	O:=Orbits(G,Omega);
	for Orbs in O do 
		T:=Random(Orbs);	
		Include(~K,T);
	end for;
V:={};
	for T in K do;
		N:=Z!(#G/#Stabilizer(G,T));
			P:=D diff T;
				for b in P do;
				Include(~V, #Stabilizer(G, T join{b}));
				end for;
			S:=Min(V);L:= Z!(#G/S); 
				if N gt L then Include(~a,<N,T>);
				end if;
	end for;
end procedure;
\end{verbatim}
Letting $D$ be the degree.
\begin{verbatim}
for k in [3..Z!(Floor(D/2)-1)] do
	for I:=1 to (Z!(NumberOfPrimitiveGroups(D)-2)) do
		SizeofOrbsPRIMk(PrimitiveGroup(D, I),k,~T);
			if #T ge 1 then <D,I,T>;
			end if;
	end for;
end for;
\end{verbatim}

\appendix

\end{document}